\newcommand{\R}{\mathbb{R}}
\newcommand{\N}{\mathbb{N}}
\DeclareMathOperator*{\argmin}{arg\,min}
\newtheorem{thm}{Theorem}
\newtheorem{prop}{Proposition}
\newtheorem{cor}{Corollary}
\newtheorem{defn}{Definition}
\newtheorem{ass}{Assumption}
\crefname{ass}{Assumption}{Assumptions}
\crefname{prop}{Proposition}{Propositions}
\title{\LARGE \bf
A model-free approach to control barrier functions using funnel control
}
\author{Lukas Lanza$^{\star,1}$, Johannes Köhler$^{\star,2}$, Dario Dennstädt$^{1,3}$, Thomas Berger$^{3}$, and Karl Worthmann$^{1}$% 
\thanks{$\star$The first two authors contributed equally to this work}
\thanks{$^{1}$Optimization-based Control Group, TU Ilmenau, Ilmenau, Germany, {\tt\small $\{$lukas.lanza, karl.worthmann$\}$@tu-ilmenau.de}}%
\thanks{$^{2}$ Institute for Dynamic Systems and Control, ETH Zurich, 8052 Zürich, Switzerland,
         {\tt\small jkoehle@ethz.ch}}% 
\thanks{$^{3}$Universit\"at Paderborn, Institut f\"ur Mathematik, Warburger Str.~100, 33098~Paderborn, Germany,  {\tt\small thomas.berger@math.upb.de,  dario.dennstaedt@uni-paderborn.de}}% 
\thanks{Funding: T.~Berger, D.~Dennstädt, L.~Lanza and K.~Worthmann gratefully acknowledge funding by the Deutsche Forschungsgemeinschaft (DFG, German Research Foundation)~-- Project-ID 471539468.}% 
}
\begin{document}

\maketitle
%\thispagestyle{empty}
%\pagestyle{empty}

%%%%%%%%%%%%%%%%%%%%%%%%%%%%%%%%%%%%%%%%%%%%%%%%%%%%%%%%%%%%%%%%%%%%%%%%%%%%%%%%
\begin{abstract}
Control barrier functions (CBFs) are a popular approach to design feedback laws that achieve safety guarantees for nonlinear systems.
The CBF-based controller design relies on the availability of a model to select feasible inputs from the set of CBF-based controls. 
In this paper, we develop a model-free approach to design CBF-based control laws, eliminating the need for knowledge of system dynamics or parameters.

Specifically, we address safety requirements characterized by a time-varying distance to a reference trajectory in the output space and construct a CBF that depends only on the measured output.
Utilizing this particular CBF, we determine a subset of CBF-based controls without relying on a model of the dynamics by using techniques from funnel control. 
The latter is a model-free high-gain adaptive control methodology, which achieves tracking guarantees via reactive feedback.
In this paper, we discover and establish a connection between the modular controller synthesis via zeroing CBFs and model-free reactive feedback.
The theoretical results are illustrated by a numerical simulation.
\end{abstract}

%\begin{IEEEkeywords}
\textbf{Keywords:} 
Control Barrier Functions, Adaptive control, Nonlinear output feedback, Uncertain systems, 
%\end{IEEEkeywords}

%%%%%%%%%%%%%%%%%%%%%%%%%%%%%%%%%%%%%%%%%%%%%%%%%%%%%%%%%%%%%%%%%%%%%%%%%%%%%%%%
\section{Introduction} \label{Sec:Introduction}
A key requirement in many control applications is the assurance of safety-critical constraints during runtime. 
This task can be quite challenging if the system exhibits complex and nonlinear dynamics. 
In recent years, a broad set of control tools has been developed to address this issue, see~\cite{wabersich2023data} for an overview. 
However, application of these methods typically hinges on having an accurate model of the system dynamics and implementation can be challenging for complex high-dimensional models. 
In contrast, it is difficult to obtain safety guarantees for popular
model-free control techniques such as PID control, active disturbance rejection control~\cite{han2009pid}, or model-free reinforcement learning methods~\cite{brunke2022safe}.
This paper fuses techniques from control barrier functions and funnel control to provide a control methodology that ensures safe operation for an entire class of nonlinear systems without relying on a model of the underlying dynamics. Hence, our approach is model-free.

\subsubsection*{Related work}
\textit{Control barrier functions (CBFs)}, first introduced in~\cite{wieland2007constructive}, can ensure satisfaction of safety-critical constraints and they have gained popularity in robotics applications because of their simplicity and modularity, see~\cite{ames2016control,ames2019control} for an overview. 
Specifically, CBFs determine the control input by solving a quadratic program (QP) that enforces a desired bound on the derivative of the barrier, which then ensures positive invariance and asymptotic stability of a safe set. However, application of CBF-based control requires a model that describes the nonlinear dynamics, measurement of the state, and implementation complexity scales with the dimension of the nonlinear model. 
The first issues have been addressed in the literature by investigating robustness to bounded disturbances~\cite{jankovic2018robust,gurriet2018towards}, sector nonlinearities~\cite{buch2021robust}, or state estimation error~\cite{dean2021guaranteeing}, and by analyzing input-to-state safety~\cite{alan2023control,kolathaya2018input}. Furthermore, some approaches investigate extracting the required model information from data  \cite{taylor2020learning,dhiman2021control}, which, however, also increases the complexity of application. 
To mitigate complexity, \cite{cohen2024safety} investigate the utilization of simplified reduced-order models. 
Furthermore, for a special class of reduced-order kinematic models, \cite{molnar2021model} provides a \textit{model-free} CBF-based control law, which requires minimal knowledge about the system dynamics. 
In this paper, we develop a new model-free CBF-based approach by leveraging funnel control.

\textit{Funnel control (FC)} and prescribed performance control (PPC) are two distinct but increasingly interconnected methodologies for enforcing predefined bounds on the tracking error to a given reference
in control systems. 
In contrast to CBF-based control, both control methodologies are model-free in the following sense. While the systems to be controlled must satisfy structural requirements like a well-defined relative degree and the same number of inputs and outputs, no knowledge of the system parameters (not even estimates of parameters) is used in the respective feedback laws. Thus, the same feedback law can be used for an entire class of systems.
FC enforces time-varying performance bounds via error-dependent gains, ensuring that the tracking error remains within a given performance funnel over time~\cite{IlchRyan02b,berger2018funnel,BergIlch21}. PPC utilizes algebraic transformations of the tracking error to enforce prescribed transient and steady-state constraints~\cite{bechlioulis2008robust,kostarigka2011adaptive,bechlioulis2014low}.\\
Recently, the relation between \textit{reciprocal} CBFs and PPC was considered in~\cite{namerikawa2024equivalence}.
It has been shown that the auxiliary error variables utilized in PPC can be used to define a reciprocal CBF, and conversely, that the PPC feedback law can be derived from the reciprocal CBF.
In the present work, we provide a new \textit{model-free} CBF-based control law, which can be applied to a large class of nonlinear systems by leveraging connections to funnel control. This is closely related to~\cite{namerikawa2024equivalence}, see the discussion in \Cref{Sec:Discussion} for details.
However, to avoid singularities in the barrier function, we use zeroing CBFs instead of reciprocal CBFs in the present work.

\subsubsection*{Contribution}
We show in \Cref{Sec:MainResults} that the difference between a time-varying predefined boundary and the norm of the tracking error defines a zeroing CBF for the entire class of systems under consideration.
Then, we derive a set of control inputs, which guarantees that the system evolves within a predefined safe set around a time-varying reference, i.e., the inputs contained in that control set render the safe set forward invariant.
Unlike well-known CBF-based controller designs, the CBF-based controller developed in this work is \textit{model-free}.
This means that no system data or parameter estimates are used in the design and therefore the same feedback law can be applied to an entire class of systems. 
Thus, we establish a connection between the modular controller synthesis via CBFs, which relies on the availability of a model, and the model-free funnel control methodology, which guarantees prescribed tracking performance via pure reactive feedback.

\ \\
\textbf{Nomenclature.}
For $x,y \in \R^n$, we use $\langle x, y \rangle = x ^\top y$ and the Euclidean norm is denoted by $\|x\| = \sqrt{\langle x,x \rangle}$. 
For $V \subseteq \R^m$, we denote by $C^k(V,\R^n)$ the set of $k$-times continuously differentiable functions~$f: V \to \R^n$.
For~$f:\R^m \to \R^n$,~$g:\R^k \to \R^m$ and~$x\in\R^k$ we use $(f \circ g)(x) := f(g(x))$.
For an interval~$I \subseteq \R$, $L^\infty(I; \R^{n})$ is the Lebesgue space of measurable essentially bounded functions $f : I\to\R^n$ with norm $\|f \|_{\infty} = \text{esssup}_{t \in I} \|f(t)\|$; for~$k \in \N$, $W^{k,\infty}(I;\R^{n})$ is the Sobolev space of all functions $f:I\to\R^n$ with $k$-th order weak derivative $f^{(k)}$ and $f,f^{(1)},\ldots,f^{(k)}\in L^\infty(I, \R^{n})$.
$\mathcal{K}_\infty^e := \{ \alpha \in C(\R,\R) \, | \, \alpha(0) = 0, \alpha\ \text{mon.\ incr.\ }, \lim_{a\to \pm \infty}\alpha(a) = \pm \infty \}$.
For $b: \R \times \R^n \to \R$ differentiable, $(L_f b)(\cdot)$ denotes the Lie derivative of~$b$ along a vector field~$f$, i.e., $\langle \nabla_\zeta b(t,\zeta), f(\zeta) \rangle$.
Further, we use $\partial_t b(t,\zeta) := \tfrac{\partial}{\partial t}b(t,\zeta)$.

\section{System class and control objective}
In this section, we introduce the class of dynamical control systems and present the control objective. 

{\textbf{System class.}}
We consider nonlinear multi-input multi-output control systems
\begin{equation} \label{eq:System}
\begin{aligned}
    \dot x(t) &= F(x(t)) + G(x(t))u , \quad x(0) = x^0 \in \R^n, \\
    y(t) &= H(x(t)),
\end{aligned}
\end{equation}
where~$x(t) \in \R^n$ is the state, $y(t) \in \R^m$ is the (measured) output, and $u(t) \in \R^m$ is the control input at time ${t\geq 0}$.
Note that the dimensions of the output and input coincide, and for the state dimension we assume $n \ge m$.
The functions $F \in C^1(\R^n;\R^n)$, $H\in C^2(\R^n;\R^m)$, and $G \in C^1(\R^n; \R^{n \times m})$ are unknown, but assumed to satisfy the following structural conditions.
\begin{ass} \label{ass:StructuralAssumptions}
The following should be satisfied by the right-hand side of~\eqref{eq:System}:
    \begin{enumerate}
        \item[a)] the matrix $(L_G H)(x) \in \R^{m \times m}$ is positive definite, i.e., $\langle z,  (L_GH)(x) z \rangle > 0$ for all~$x \in \R^n$ and all ${z \in \R^m \setminus \{0\}}$; 
        \item[b)] there exists a diffeomorphism $T: \R^n \to \R^n$, $x \mapsto (y,\eta)$ such that~\eqref{eq:System} can equivalently be written in normal form
        \begin{equation} \label{eq:System_InputOutput}
            \begin{aligned}
                \dot y(t) &= f(y(t),\eta(t)) + g(y(t),\eta(t)) u(t), \\
             \dot \eta(t) & = q(y(t),\eta(t)),
            \end{aligned}
            \end{equation}
            where we have $f(y,\eta) := (L_F H)(T^{-1}(y,\eta))$, $g(y,\eta) := (L_G H)(T^{-1}(y,\eta))$, and~$\eta(t) \in \R^{n-m}$ is the unmeasured internal state at time~$t \ge 0$;
        \item[c)] the internal dynamics are bounded-input bounded-state stable, i.e., for all~$c_0 > 0$ there exists~$\bar q > 0$ such that for all $\eta^0 \in \R^{n-m}$ and all $\zeta\in L^\infty(\R_{\ge 0},\R^{m})$ it holds that $\|\eta^0\|+ \|\zeta\|_\infty  \leq c_0$ implies ${\|\eta (\cdot;0,\eta^0,\zeta)\|_\infty \leq \bar q}$, where $\eta (\cdot;0,\eta^0,\zeta):\R_{\ge 0}\to\R^{n-m}$ denotes the unique global solution of the second of equations~\eqref{eq:System_InputOutput} for~${y=\zeta}$.\footnote{The aforementioned assumption ensures that the maximal solution $\eta (\cdot;0,\eta^0,\zeta)$ can be extended to a global solution, cf.~\cite[\S~10, Thm.~XX]{Walt98}.}
    \end{enumerate}
\end{ass}
\Cref{ass:StructuralAssumptions}a) means in particular that the systems under consideration have global relative degree one, cf.~\cite[Def.~5.1]{byrnes1991asymptotic}.
Although we do not assume symmetry, we note that for many mechanical systems, $(L_GH)(x)$ simplifies to an inertia-related matrix, which is inherently symmetric and positive-definite and thus satisfies \Cref{ass:StructuralAssumptions}a), cf.~\cite{berger2021tracking}.
While assuming a well-defined relative degree is common in high-gain adaptive control, cf.~\cite{IlchRyan02b,Hack17}, we restrict ourselves to systems with a relative degree one for simplicity of presentation of the main ideas.
A generalization of the results presented here will be topic of future work.
Instead of positive definiteness of~$g(\cdot)$, negative definiteness is a viable assumption as well, which will only change the sign in the feedback law derived later.
We emphasize that, while existence of the diffeomorphism $T$ 
putting system~\eqref{eq:System} in the form~\eqref{eq:System_InputOutput} is assumed in \Cref{ass:StructuralAssumptions}b), concrete knowledge about this transformation is not required -- it is merely used as a tool in the following analysis.
Sufficient conditions for its existence can be found in \cite[Cor.~5.6]{byrnes1991asymptotic}.
\Cref{ass:StructuralAssumptions}c) ensures that the unmeasured internal state evolves within a (unknown) compact set for bounded~$y$.
In what follows, we call a (locally) absolutely continuous function $x: [0, \omega) \to \R^n$, $\omega\in (0,\infty]$, with $x(0) =x^0$ a solution (in the sense of Carathéodory) to~\eqref{eq:System}, if it satisfies~\eqref{eq:System} for almost all~$t \in [0,\omega)$. A solution~$x$ is said to be maximal, if it has no right extension that is also a solution; a maximal solution is global, if $\omega=\infty$.	

\textbf{Control objective.}
We aim at designing a CBF-based feedback law such that the output~$y$ of system~\eqref{eq:System} tracks a given reference signal~$y_r \in Y_r$, where
\begin{equation} \label{eq:ReferenceSignal}
        Y_r := C^1(\R_{\ge 0};\R^m) \cap W^{1,\infty}(\R_{\ge 0};\R^m),
\end{equation}
within prescribed time-varying error bounds.
To be more precise, the objective is to guarantee that
\begin{equation} \label{eq:ControlObjective}
   \forall \, t \ge 0:\  \| y(t) - y_r(t) \| \le \psi(t) 
\end{equation}
for a positive function~$\psi \in \Psi$, 
where
\begin{equation} \label{Def:FunnelBoundary}
    \begin{small}
    \Psi \!\!:=\! \!\left\{  \!\phi \!\in\! C^1 (\R_{\ge 0};\!\R) \!\cap\! L^{\infty}(\R_{\ge 0};\!\R)  \!  \left|  \! \! \begin{array}{l}
             \inf_{t\ge 0} \phi(t)>0,  \\[1mm]
             \forall\, t \!\ge\! 0 \!:  |\dot \phi(t)| \!\le\! c \phi(t) \! \! \!
        \end{array} \!  \right.\right\}   .
    \end{small}
\end{equation}

Note that \eqref{Def:FunnelBoundary} allows for a variety of funnel boundaries; in particular, exponentially decaying functions are contained, although $\psi$ is not required to be monotonically decreasing.
For a funnel boundary~$\psi \in \Psi$ and a reference $y_r \in Y_r$, we define the safe set
\begin{equation} \label{eq:SafeSet}
    \begin{aligned}
        \mathcal{C} &:= \{ (t,y) \, | \,  \| y - y_r(t) \| \le \psi(t)  \} \subset \R_{\ge 0} \times \R^m, \\
        \mathrm{int} (\mathcal{C}) &:= \{ (t,y) \, | \, \| y - y_r(t) \| < \psi(t) \}, \\
        \partial\mathcal{C} &:= \mathcal{C} \setminus \mathrm{int} (\mathcal{C}).
    \end{aligned}
\end{equation}
The control objective~\eqref{eq:ControlObjective} is equivalent to $(t,y(t)) \in \mathcal{C}$ for all~$t \ge 0$. 
In the following, we provide a model-free CBF-based control law for systems~\eqref{eq:System}, i.e., one which is independent of the functions $F$, $G$, $H$ in~\eqref{eq:System} as long as they satisfy \Cref{ass:StructuralAssumptions}. Specifically, we show how to synthesize a CBF-based feedback that renders $\mathcal{C}$ in~\eqref{eq:SafeSet} forward invariant\footnote{A set $\mathcal{A}\subseteq\R_{\ge 0}\times \R^n$ is called forward invariant for a differential equation $\dot x(t) = f(t,x(t))$ with $f\in C(\mathcal{A},\R^n)$, if for any $(0,x^0)\in\mathcal{A}$ all maximal solutions $x:[0,\omega)\to\R^n$ with $x(0)=x^0$ satisfy $(t,x(t))\in \mathcal{A}$ for all $t\in[0,\omega)$.} and thus achieves~\eqref{eq:ControlObjective} using only the available output measurement $y(t)$.

\section{Main results} \label{Sec:MainResults}
In this section, we present our main results.
First, we show that the distance between the (norm of the) tracking error and the funnel boundary defines a CBF. 
Based on this CBF, we then define a set of feedback laws and show that it renders the set~$\mathcal{C}$ in~\eqref{eq:SafeSet} forward invariant without assuming knowledge of the system data~$F,G$ or measurement of the full state~$x(t)$.
Before doing so, we briefly recap the notions of control barrier functions, safe sets, and corresponding controls.

\subsection{Control barrier functions}

Following~\cite[Def.~2]{ames2019control}, we recall the following concepts.
\begin{defn} \label{Def:TVCBF}
   Let~$U \subseteq \R^m$ and~$\mathcal{D} \subset \R_{\ge 0} \times \R^n$.
     A continuously differentiable function $b: \mathcal{D} \to \R$ is a \textit{(time-varying) CBF} for system~\eqref{eq:System}, if there exists 
     $\alpha \in \mathcal{K}_\infty^e$ such that, for all $(t,x) \in \mathcal{D}$,
    \begin{equation*}
        \sup_{u \in U} \left[ (L_F b)(t,x) \!+\! (L_G b)(t,x) u \!+\! \partial_t b(t,x) \right] > - \alpha(b(t,x)).
    \end{equation*}
    The set 
    $   \{ (t,x)\in\mathcal{D} \, | \, b(t,x) \ge 0\} $
    is called \textit{safe set} for~\eqref{eq:System} with respect to~$b$.
    For $(t,x) \in \mathcal{D}$ and a fixed $\alpha \in \mathcal{K}_\infty^e$ the set of \textit{CBF-based controls} $K_{\rm CBF}(t,x,\alpha)$ for~\eqref{eq:System} is given by 
\begin{equation*} 
    \left\{ u\! \in\! U |  (L_F b)(t,x) \!+\! (L_G b)(t,x)u \!+\! \partial_t b(t,x)  \!\ge\! - \alpha ( b(t,x)) \right\}\!.
\end{equation*}%
\end{defn}
The following proposition characterizes the safety properties of CBF-based controllers and is based on~\cite[Thm.~2]{ames2019control}.\footnote{%
The strict inequality in Def.~\ref{Def:TVCBF} ensures $\big(\partial_t b(t,x), \nabla_x b(t,x)\big)\neq 0$ for all $(t,x)\in\mathcal{D}$ with $b(t,x)=0$ and that the set $K_{\mathrm{CBF}}(t,x,\alpha)$ is non-empty for all $(t,x)\in\mathcal{D}$, see also~\cite[Rem.~1]{cohen2024safety}.}
\begin{prop}
\label{prop:CBF}
For $U \subseteq \R^m$ and~$\mathcal{D} \subset \R_{\ge 0} \times \R^n$, consider a CBF $b$ according to \Cref{Def:TVCBF} with corresponding  ${\alpha \in \mathcal{K}_\infty^e}$. 
Then any Lipschitz continuous policy $\mu:\mathcal{D}\to\R^m$ satisfying $\mu(t,x)\in K_{\mathrm{CBF}}(t,x,\alpha)$ for all $(t,x)\in \mathcal{D}$ renders the safe set $\{ (t,x)\in\mathcal{D} \, | \, b(t,x) \ge 0\}$ positively invariant with the dynamics~\eqref{eq:System} for $u(t) = \mu(t,x(t))$. 
\end{prop}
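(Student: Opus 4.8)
The plan is to reduce forward invariance of the safe set to a scalar differential inequality along closed-loop trajectories, and then to close the argument with a comparison-type estimate. Fix an initial condition $(0,x^0)$ in the safe set, i.e.\ $b(0,x^0)\ge 0$, and let $x:[0,\omega)\to\R^n$ be a maximal solution of the closed loop $\dot x = F(x)+G(x)\mu(t,x)$. Local existence and uniqueness of this solution follow from $F,G\in C^1$ together with Lipschitz continuity of $\mu$, which make the right-hand side continuous in $(t,x)$ and locally Lipschitz in $x$; in particular $x\in C^1$ on $[0,\omega)$.

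First I would introduce the scalar function $\beta(t):=b(t,x(t))$, which is continuously differentiable because $b$ is continuously differentiable and $t\mapsto(t,x(t))$ is $C^1$. Differentiating along the closed-loop trajectory and substituting $\dot x(t)=F(x(t))+G(x(t))\mu(t,x(t))$ gives
\begin{equation*}
  \dot\beta(t) = \partial_t b(t,x(t)) + (L_F b)(t,x(t)) + (L_G b)(t,x(t))\,\mu(t,x(t)).
\end{equation*}
Since $\mu(t,x(t))\in K_{\mathrm{CBF}}(t,x(t),\alpha)$ by hypothesis, the defining inequality of the CBF-based control set yields the scalar differential inequality $\dot\beta(t)\ge -\alpha(\beta(t))$ for all $t\in[0,\omega)$.

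It then remains to show that $\beta(0)\ge 0$ forces $\beta(t)\ge 0$ on all of $[0,\omega)$, which is precisely the claimed invariance of $\{(t,x)\in\mathcal{D}\mid b(t,x)\ge 0\}$. I would argue by contradiction via a first-exit time: if $\beta(t_1)<0$ for some $t_1\in(0,\omega)$, set $t_0:=\sup\{t\le t_1\mid \beta(t)\ge 0\}$, so that continuity of $\beta$ gives $\beta(t_0)=0$ while $\beta(t)<0$ on $(t_0,t_1]$. On this interval, $\alpha(\beta(t))\le\alpha(0)=0$ because $\alpha\in\mathcal{K}_\infty^e$ is monotonically increasing with $\alpha(0)=0$, whence $\dot\beta(t)\ge-\alpha(\beta(t))\ge 0$; integrating yields $\beta(t)\ge\beta(t_0)=0$ for $t\in(t_0,t_1]$, contradicting $\beta(t)<0$. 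Equivalently, one may compare $\beta$ with the solution of $\dot z=-\alpha(z)$, $z(0)=\beta(0)\ge 0$, for which $z\equiv 0$ is invariant since $\alpha(0)=0$.

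The main obstacle I anticipate is exactly this comparison step, since $\alpha$ is assumed only continuous and monotone, not Lipschitz, so the uniqueness hypothesis of the classical comparison lemma is not directly available. The first-exit argument sketched above circumvents this by invoking only $\alpha(0)=0$ and monotonicity, so that $-\alpha(\beta)\ge 0$ whenever $\beta\le 0$, and no uniqueness of the comparison ODE is required. A secondary bookkeeping point is that the computation is meaningful only while the trajectory stays in $\mathcal{D}$; this is automatic, since $\mu$ and $b$ are defined on $\mathcal{D}$ and the conclusion $\beta\ge 0$ confines $(t,x(t))$ to the safe set, hence to $\mathcal{D}$, for all $t\in[0,\omega)$.
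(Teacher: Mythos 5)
Your proof is correct. Note that the paper does not prove \Cref{prop:CBF} in-line at all: it delegates to \cite[Thm.~2]{ames2019control}, whose standard argument is precisely your first step, namely reducing invariance to the scalar differential inequality $\dot\beta(t)\ge-\alpha(\beta(t))$ for $\beta(t)=b(t,x(t))$ along closed-loop trajectories. Where you genuinely diverge from the cited route is in how you close this inequality: the textbook proof invokes the comparison lemma, whose hypotheses require uniqueness of solutions of the comparison ODE $\dot z=-\alpha(z)$, typically secured by assuming $\alpha$ locally Lipschitz. Your first-exit argument uses only $\alpha(0)=0$ and monotonicity, and this is more than a cosmetic gain here: the paper's class $\mathcal{K}_\infty^e$ demands only continuity and monotonicity of $\alpha$, so your argument covers exactly the stated hypotheses without smuggling in a Lipschitz assumption, and you correctly identified this as the delicate point. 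The exit-time details check out: the set $\{t\le t_1 \mid \beta(t)\ge 0\}$ is nonempty (it contains $0$) and closed, so $t_0$ is attained with $\beta(t_0)=0$, and integrating $\dot\beta\ge-\alpha(\beta)\ge 0$ over $[t_0,t]$ for $t\in(t_0,t_1]$ yields the contradiction. One small repair: your final bookkeeping remark is circular as phrased --- you cannot use the conclusion $\beta\ge 0$ to justify that the trajectory stays in $\mathcal{D}$, since the differential inequality needed to prove $\beta\ge0$ presupposes $(t,x(t))\in\mathcal{D}$. The correct (and simpler) reason is the one in the paper's footnote defining forward invariance: the closed-loop right-hand side $F(x)+G(x)\mu(t,x)$ is only defined on $\mathcal{D}$, so any maximal solution satisfies $(t,x(t))\in\mathcal{D}$ on its whole interval of existence $[0,\omega)$ by definition, and your computation is valid there; the conclusion $\beta(t)\ge0$ on $[0,\omega)$ then gives exactly the invariance claimed (which, per that definition, does not require $\omega=\infty$). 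With that sentence fixed, the argument is complete and in fact slightly more general than the standard citation.
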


This result provides flexibility in determining the control law using QP-based implementations with the constraint set $K_{\mathrm{CBF}}$, while optimizing for different performance criteria~\cite{ames2019control}.  
However, implementation requires evaluation of the set $K_{\mathrm{CBF}}(t,x,\alpha)$, which 
requires knowledge of~$F,G$ and access to the full state~$x(t)$. Both of these issues are circumvented by the proposed method.

\subsection{Main result}
We present the main results of this article, namely \Cref{prop:bFCisCBF} and \Cref{Thm:UinK}.
\begin{prop} \label{prop:bFCisCBF}
Let~$U = \R^m$. Let a boundary~${\psi \in \Psi}$ according to~\eqref{Def:FunnelBoundary}
    and a reference~$y_r \in Y_r$ according to~\eqref{eq:ReferenceSignal} be given.
For system~\eqref{eq:System} satisfying \Cref{ass:StructuralAssumptions} the function
\begin{equation} \label{eq:CBF}
    b: \R_{\ge 0} \!\times\! \R^n \!\to\! \R, \,(t,x) \!\mapsto\! \tfrac{1}{2} \! \left(\psi(t)^2 \!-\! \|H(x)\!-\!y_r(t)\|^2 \right)
\end{equation}
is a CBF according to \Cref{Def:TVCBF} for $\mathcal{D} = \R_{\ge 0} \times  \R^n$.
\end{prop}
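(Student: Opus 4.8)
The plan is to verify the defining inequality of \Cref{Def:TVCBF} directly, exploiting that $b$ depends on the state only through the output map $H$. Writing $e := H(x) - y_r(t)$ for brevity, I would first compute the gradient $\nabla_x b(t,x) = -(DH(x))^\top e$, where $DH$ denotes the Jacobian of $H$, together with the partial time derivative $\partial_t b(t,x) = \psi(t)\dot\psi(t) + \langle e, \dot y_r(t)\rangle$. Using the chain-rule identities $(DH(x))F(x) = (L_F H)(x)$ and $(DH(x))G(x) = (L_G H)(x)$, the Lie derivatives of $b$ become $(L_F b)(t,x) = -\langle e, (L_F H)(x)\rangle$ and $(L_G b)(t,x) = -e^\top (L_G H)(x)$. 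Hence the quantity to be bounded reads
\[
(L_F b)(t,x) + (L_G b)(t,x)u + \partial_t b(t,x) = -\langle e,(L_F H)(x)\rangle - \langle (L_G H)(x)^\top e,\,u\rangle + \psi(t)\dot\psi(t) + \langle e,\dot y_r(t)\rangle.
\]

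The decisive structural observation is that this expression is affine in $u$ with coefficient vector $-(L_G H)(x)^\top e$. I would then split the analysis according to whether $e$ vanishes. By \Cref{ass:StructuralAssumptions}a) the matrix $(L_G H)(x)$ is positive definite and therefore invertible, so $(L_G H)(x)^\top e = 0$ holds if and only if $e = 0$. Consequently, whenever $H(x) \neq y_r(t)$ the supremum over $u \in \R^m = U$ of the affine-in-$u$ expression is $+\infty$, which trivially exceeds $-\alpha(b(t,x))$ for any $\alpha \in \mathcal{K}_\infty^e$.

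It therefore remains to treat the boundary case $e = 0$, i.e.\ $H(x) = y_r(t)$. Here all $e$-dependent terms drop out and the expression reduces to $\psi(t)\dot\psi(t)$, while $b(t,x) = \tfrac12\psi(t)^2 > 0$ since $\inf_{t\ge 0}\psi(t) > 0$. Using the funnel constraint $|\dot\psi(t)| \le c\,\psi(t)$ from~\eqref{Def:FunnelBoundary}, I would bound $\psi(t)\dot\psi(t) \ge -c\,\psi(t)^2 = -2c\,b(t,x)$. Choosing the function $\alpha(s) := \kappa s$ with any fixed $\kappa > 2c$, which lies in $\mathcal{K}_\infty^e$, then yields $\psi(t)\dot\psi(t) \ge -2c\,b(t,x) > -\kappa\,b(t,x) = -\alpha(b(t,x))$, the required strict inequality. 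Combining both cases establishes that $b$ is a CBF with this $\alpha$.

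I expect the only genuine subtlety to be the case distinction: away from the reference the inequality holds effortlessly because the input can render the derivative arbitrarily large, which is precisely where the positive definiteness of $(L_G H)$ enters, so that the single binding condition is the one on the zero-error manifold, where $\alpha$ must be selected to dominate the worst-case growth rate $c$ of the funnel boundary. Note that neither the normal form of \Cref{ass:StructuralAssumptions}b) nor the internal-dynamics bound of \Cref{ass:StructuralAssumptions}c) is needed for this statement; they will only become relevant for the forward-invariance and boundedness analysis in \Cref{Thm:UinK}.
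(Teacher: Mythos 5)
Your proof is correct, and it takes a genuinely different route from the paper's. The paper argues constructively: it inserts the explicit feedback-linearizing witness input $u=(g\circ T)^{-1}(x)\,(-\max\{2c,1\}e-(f\circ T)(x)+\dot y_r)$ built from the normal form of \Cref{ass:StructuralAssumptions}b), and derives a uniform lower bound valid at \emph{every} $(t,x)$ simultaneously, yielding the linear choice $\alpha(s)=2\max\{2c,1\}s$. You instead exploit the unboundedness of $U=\R^m$: the expression is affine in $u$ with coefficient $-e^\top (L_GH)(x)$, which by positive definiteness (hence invertibility) of $(L_GH)(x)$ vanishes only when $e=0$; off the zero-error set the supremum is $+\infty$ and the inequality is vacuous, while on $e=0$ only $\psi\dot\psi\ge -c\psi^2=-2c\,b$ survives, with $b=\tfrac12\psi^2>0$, handled by $\alpha(s)=\kappa s$, $\kappa>2c$. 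Both arguments are sound and both produce a linear $\alpha$; yours is more economical in assumptions, using only \Cref{ass:StructuralAssumptions}a) and avoiding the diffeomorphism $T$ entirely (your closing remark that b) and c) are not needed for this proposition is accurate --- note also that $(g\circ T)(x)=(L_GH)(x)$, so even the paper's witness could in principle be phrased without $T$). What the paper's constructive route buys is an explicit element of $K_{\rm CBF}(t,x,\alpha)$, which foreshadows the structure of the estimates in \Cref{Thm:UinK}, where the unbounded-supremum shortcut is unavailable because one must verify the inclusion for the \emph{specific} bounded-gain inputs in $\mathcal{U}(t,y)$; your case-$e\neq0$ step, relying on arbitrarily large inputs, gives no such witness, though \Cref{Def:TVCBF} asks only for the supremum bound, so this costs nothing for the proposition itself. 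One cosmetic omission: you should record, as the paper does in one line, that $b$ is continuously differentiable on $\mathcal{D}$ (immediate from $H\in C^2$, $\psi\in C^1$, $y_r\in C^1$), since this is part of the definition of a CBF.
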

\begin{proof}
    We note that~$b$ is continuously differentiable on~$\mathcal{D}$.
    For the sake of better legibility, we omit the arguments of all functions whenever it is clear from the context. 
    We use the notation $e=H(x) -y_r$ and the diffeomorphism~$T$ as well as the functions in~\eqref{eq:System_InputOutput} from \Cref{ass:StructuralAssumptions}.
    Inserting the input $u=(g\circ T)^{-1}(x) \cdot (-\max\{2c,1\}e-(f\circ T)(x) +\dot y_r)$, which we are allowed to select with $c>0$ from~\eqref{Def:FunnelBoundary}, we obtain
    \begin{equation*}
        \begin{aligned}
            & \partial_t b 
            + L_Fb
            + (L_Gb)
            u 
            = \dot \psi \psi - \langle e, -\dot y_r + f\circ T + (g\circ T) u \rangle \\
            &=  \dot \psi \psi - \langle e, -\dot y_r + f\circ T \rangle \\
            & \quad - \langle e, (g\circ T)(g\circ T)^{-1}(-\max\{2c,1\}e-f\circ T+\dot y_r) \rangle \\
           & \ge \!- |\dot \psi| \psi + \max\{2c,1\}\|e\|^2 \ge - c\psi^2 + \max\{2c,1\}\|e\|^2 \\
           & \ge \!-2c\psi^2 + c \inf_{t\ge 0}\psi(t)^2 + \max\{2c,1\}\|e\|^2 \\
           & \ge \!-\!\max\{2c,1\} (\psi^2 \!-\! \| H(x)\!-\!y_r\|^2) \!+\! c\inf_{t\ge 0}\psi(t)^2 > \!- \alpha(b), 
        \end{aligned}
    \end{equation*}
    where we used~$|\dot \psi(t)| \le c \psi(t)$ for all~$t \ge 0$, and defined $\alpha(s) := 2\max\{2c,1\} s$, by which $\alpha\in\mathcal{K}_\infty^e$. 
    The supremum in \Cref{Def:TVCBF} therefore also fulfills the strict inequality.
\end{proof}
The CBF~$b$ depends only on the measured output~$y$ and not on the full state~$x$. To emphasize this, with some abuse of notation,  we denote $b(t,y)=\tfrac{1}{2}\left(\psi(t)^2-\|y-y_r(t)\|^2\right)$ in the following. 
To derive a feedback control law from the CBF~\eqref{eq:CBF}, we define the following set of candidate controls
\begin{equation} \label{eq:u_FC_from_b}
        \mathcal{U}(t,y)= \left\{ \left.  \! \frac{k \nabla_y b(t,y)}{b(t,y)}  \right| k \in[\underline{k},\bar k] \right\}, \quad (t,y) \in \text{int}(\mathcal{C}),
\end{equation}
with user-chosen constants $\overline{k}\geq \underline{k}>0$, where we recall that~$b(t,y) > 0$ for~$(t,y) \in \text{int}(\mathcal{C})$. 
Note that no system data is used to define the set~$\mathcal{U}(t,y)$.
The feedback given by~\eqref{eq:u_FC_from_b} resembles a funnel controller, cf.~\cite[Eq.~(2)]{IlchRyan02b}. 
The following result shows that control laws based on the model-free set $\mathcal{U}(t,y)$ can be interpreted as CBF-based controls.
\begin{thm} \label{Thm:UinK}
Consider a system~\eqref{eq:System} satisfying \cref{ass:StructuralAssumptions} with diffeomorphism~$T$, the CBF~$b$ from~\eqref{eq:CBF} with funnel boundary 
$\psi \in \Psi$ and reference~$y_r \in Y_r$,
$U=\R^m$ and $\mathcal{D}= \R_{\ge 0}\times\R^n$, 
the candidate control set defined in~\eqref{eq:u_FC_from_b}, and the safe set $\mathcal{C}$ defined in~\eqref{eq:SafeSet}. 
For any $\bar{q}>0$, 
there exists a linear function $\alpha\in\mathcal{K}^e_\infty$ such that, for all $(t,x)\in\R_{\ge 0}\times\R^n$, $(y,\eta)=T(x)$, we have
\[
    \Big( (t,y)\in \mathrm{int}(\mathcal{C})\ \wedge\ \|\eta\| \le \bar{q} \Big)
    \implies  \mathcal{U}(t,y) \subseteq K_{\rm CBF}(t,x,\alpha).
\]
\end{thm}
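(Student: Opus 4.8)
My plan is to show that every element of $\mathcal{U}(t,y)$ satisfies the inequality defining $K_{\mathrm{CBF}}(t,x,\alpha)$ in \Cref{Def:TVCBF}, for a linear $\alpha(s)=Ls$ whose slope $L$ is allowed to depend on $\bar q$. Writing $e=H(x)-y_r(t)$ and noting $\nabla_y b(t,y)=-e$, any $u\in\mathcal{U}(t,y)$ has, by \eqref{eq:u_FC_from_b}, the form $u=-ke/b$ with $k\in[\underline k,\bar k]$. Mirroring the computation in the proof of \Cref{prop:bFCisCBF} and using the normal form \eqref{eq:System_InputOutput} with $f=(L_FH)\circ T^{-1}$, $g=(L_GH)\circ T^{-1}$, I would first establish the identity
\begin{equation*}
  \partial_t b + L_F b + (L_G b)u \;=\; \dot\psi\,\psi + \langle e,\dot y_r - f\rangle + \tfrac{k}{b}\langle e, g e\rangle .
\end{equation*}
Since $b>0$ on $\mathrm{int}(\mathcal{C})$ and $g$ is positive definite by \Cref{ass:StructuralAssumptions}a), the last term is nonnegative and monotonically increasing in $k$; hence it suffices to verify the CBF inequality for the worst case $k=\underline k$.

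The next step is to convert the state-dependent quantities into uniform constants via a compactness argument, which is exactly where the hypotheses of the implication enter. Under $(t,y)\in\mathrm{int}(\mathcal{C})$ and $\|\eta\|\le\bar q$, the pair $(y,\eta)$ lies in the compact set $\overline{B}(0,\|y_r\|_\infty+\|\psi\|_\infty)\times\overline{B}(0,\bar q)$, on which the continuous maps $f$ and $g$ are bounded. I would thus set $F_0:=\max\|f\|$, $M:=\|\dot y_r\|_\infty+F_0$, $\bar\psi:=\|\psi\|_\infty$, and let $\lambda_g>0$ be the minimum over this set of the smallest eigenvalue of the symmetric part $(g+g^\top)/2$, which is strictly positive by \Cref{ass:StructuralAssumptions}a) together with continuity and compactness. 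With $r:=\|e\|$, the Cauchy--Schwarz inequality, $|\dot\psi|\le c\psi$, and $\langle e,ge\rangle\ge\lambda_g r^2$ then yield the lower bound
\begin{equation*}
  \partial_t b + L_F b + (L_G b)u \;\ge\; -c\psi^2 - Mr + \frac{\underline k\lambda_g r^2}{b}.
\end{equation*}

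The analytic core, which I expect to be the main obstacle, is to dominate this right-hand side by $-Lb$ with a single slope $L$ valid on all of $\mathrm{int}(\mathcal{C})$. The difficulty is that the offset $c\psi^2$ does not vanish as $(t,y)\to\partial\mathcal{C}$, whereas $b\to 0$ there, so the naive bound $-c\psi^2\ge -Lb$ fails near the boundary; the saving grace is that the high-gain term $\underline k\lambda_g r^2/b$ blows up precisely there. To exploit this uniformly I would substitute $\psi^2=2b+r^2$, so that $c\psi^2=2cb+cr^2$, and reduce the desired inequality to
\begin{equation*}
  \frac{\underline k\lambda_g r^2}{b} + (L-2c)b \;\ge\; cr^2 + Mr .
\end{equation*}
Applying the arithmetic--geometric mean inequality to the two terms on the left bounds them below by $2r\sqrt{\underline k\lambda_g(L-2c)}$, so for $r>0$ it suffices that $2\sqrt{\underline k\lambda_g(L-2c)}\ge cr+M$; since $r<\psi\le\bar\psi$, this holds once $L\ge 2c+(c\bar\psi+M)^2/(4\underline k\lambda_g)$, while the case $r=0$ reduces to $(L-2c)b\ge 0$. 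Choosing $\alpha(s)=Ls$ with this slope then gives $\mathcal{U}(t,y)\subseteq K_{\mathrm{CBF}}(t,x,\alpha)$, with $\alpha\in\mathcal{K}_\infty^e$ linear as claimed. The only remaining points requiring care are the strictness bookkeeping at $r=0$ and confirming that $L$ depends on the data only through the $\bar q$-dependent bounds $F_0,\lambda_g$, consistent with the quantifier order in the statement.
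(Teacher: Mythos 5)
Your proposal is correct and takes essentially the same route as the paper's proof: the same Lie-derivative identity, the same compactness-derived constants (your $M$ and $\lambda_g$ play exactly the roles of the paper's $\bar f$ and $\underline{g}$, and your substitution $\psi^2=2b+r^2$ mirrors the paper's use of $\|e\|^2=\psi^2-2b$), and the same analytic core. The paper resolves the boundary difficulty by reducing to $l(b)=a/b-M$ and bounding it below by its tangent line $-\tfrac{M^2}{4a}b$ via convexity, which is precisely your AM--GM step $\tfrac{a}{b}+Lb\ge 2\sqrt{aL}$ in different clothing, so both yield a linear $\alpha$ whose slope depends on the system only through $\bar q$-dependent bounds.
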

\begin{proof}
    Like before, we omit the arguments of functions whenever it is clear from the context. We use the functions in~\eqref{eq:System_InputOutput} from \Cref{ass:StructuralAssumptions} and define the constants
    \begin{align*}
        &\bar y_r := \|y_r\|_\infty, \ \hat y_r := \| \dot y_r\|_\infty, \ \bar \psi := \|\psi\|_\infty, \underline{\psi} := \inf_{s \ge 0} \psi(s),\\
        &\bar f := \max_{ \|z\| \le \bar \psi + \bar y_r, \, \|q\| \le \bar q } \| f(z,q) \| + \hat y_r, \\ 
        & \underline{g} := \min_{\|e\|=1} \min_{ \|z\| \le \bar \psi + \bar y_r, \, \|q\| \le \bar q } \langle e, g(z,q) e \rangle.
    \end{align*}
    Note that these constants are well-defined due to \Cref{ass:StructuralAssumptions} and continuity of the involved functions.
    With a similar calculation as in the proof of \Cref{prop:CBF}, we obtain, for $(t,x)\in\R_{\ge 0}\times\R^n$, $(y,\eta)=T(x)$, such that $(t,y) \in \mathrm{int}(\mathcal{C})$, and $u\in \mathcal{U}(t,y) = \mathcal{U}(t,H(x))$,
    \begin{equation*}
        \begin{aligned}
            & \partial_t b + L_fb + (L_gb) u 
            = \dot \psi \psi - \langle e, -\dot y_r + f\circ T + (g\circ T) u \rangle \\
             &= \dot \psi \psi - \langle e, f\circ T - \dot y_r \rangle +  \frac{2}{\psi^2 - \|e\|^2} \langle e, (g\circ T) k e \rangle \\
             & \ge -|\dot \psi|\psi - \psi \bar f + 2 \underline{k}\, \underline{g}\frac{\|e\|^2}{\psi^2 - \|e\|^2}  \\
             & = -|\dot \psi|\psi - \psi \bar f + \underline{k}\, \underline{g}\frac{\psi^2 \!-\! 2 b}{b}  
              = \!-|\dot \psi|\psi \!-\! \psi \bar f \!-\! 2 \underline{k}\, \underline{g} + \underline{k}\, \underline{g} \frac{\psi^2 }{b} \\
             & \ge -c \bar \psi^2 - \bar \psi \bar f - 2 \underline{k} \,\underline{g} + \frac{\underline{k}\, \underline{g} \,\underline{\psi}^2}{b} .
        \end{aligned}
    \end{equation*}
    In the first estimate, we use~$\|e\| < \psi$ since $(t,y) \in \textrm{int}(\mathcal{C})$.
    In the last estimate, we use~$|\dot \psi| \le c \psi$ according to \eqref{Def:FunnelBoundary}.
    The last line is a function of the form $l(s) = a/s - M$, for $s=b > 0$, with appropriate~$a,M>0$. 
    For $s_0=\frac{2a}{M}$, we have $l(s_0) = -\frac{M}{2}$ and $l'(s_0) = -\frac{M^2}{4a} < 0$, thus the tangent of~$l$ at~$s_0$ is given by $\tau(s) = l'(s_0) (s-s_0) + l(s_0) = -\frac{M^2}{4a}s$ and, since $l$ is convex, we have $l(s) \ge \tau(s)$ for all $s>0$. Since $b(t,y)>0$ for all $(t,y) \in \textrm{int}(\mathcal{C})$, it thus follows that
    $
        \partial_t b + L_fb + (L_gb) u \ge \tau(b).
    $
    Setting $\alpha(s)= -\tau(s)$, we find that
    $\alpha\in\mathcal{K}_\infty^e$ and $u\in K_{\rm CBF}(t,x,\alpha)$.
    Therefore, we have $\mathcal{U}(t,y) \subseteq K_{\rm CBF}(t,x,\alpha)$. 
\end{proof}
We emphasize that no knowledge about the system data from~\eqref{eq:System}, respectively~\eqref{eq:System_InputOutput}, is used in the design of feedback laws~$\mathcal{U}(t,y)$ in~\eqref{eq:u_FC_from_b}. 
The following result characterizes the closed-loop properties when choosing inputs $u$ from $\mathcal{U}(t,y)$.
In particular, we show that controls contained in $\mathcal{U}(t,y)$ render the set~$\text{int}(\mathcal{C})$ forward invariant, that is, we show $\|y(t) - y_r(t)\| < \psi(t)$ for~$t\ge0$ in the closed-loop system.
\begin{cor}
\label{cor_funnel}
Consider a system~\eqref{eq:System} satisfying \Cref{ass:StructuralAssumptions}.
    Let a funnel boundary~$\psi \in \Psi$ according to~\eqref{Def:FunnelBoundary}
    and a reference~$y_r \in Y_r$ according to~\eqref{eq:ReferenceSignal} be given.
    Let  $(0,y^0) \in \textrm{int}(\mathcal{C})$ from~\eqref{eq:SafeSet}, $\eta^0\in\R^{n-m}$, 
    $T:\R^n\to\R^n$ be the diffeomorphism from \Cref{ass:StructuralAssumptions}. Consider a feedback policy $\mu:\R_{\geq0}\times\R^m\to\R^m$ that is measurable in the first argument, continuous in the second, and satisfies ${\mu(t,y)\in\mathcal{U}(t,y)}$ for all $(t,y)\in\mathrm{int}(\mathcal{C})$. 
Then, system~\eqref{eq:System} with initial condition $x^0 = T^{-1}(y^0,\eta^0)$  
and input $u(t):=\mu(t,y(t))$
    has a solution~$x$,
    every solution can be maximally extended,
    and every maximal solution is global, i.e., $x: [0,\infty) \to \R^n$.
    Moreover,
    \begin{itemize}
        \item[(i)] $u\in L^\infty(\R_{\ge 0},\R^m)$,
        \item[(ii)] there exists~$\varepsilon \in (0,1)$ such that, for all~$t \ge 0$, we have $b(t,y(t)) \ge \tfrac12 (1-\varepsilon^2)\psi(t)^2$.
    \end{itemize}
    In particular, the latter implies $(t,y(t))\in \mathrm{int}(\mathcal{C})$ for~$t \ge 0$.
\end{cor}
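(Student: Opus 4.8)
The plan is to pass to the normal form \eqref{eq:System_InputOutput} and run the bootstrapping argument typical of funnel control, feeding off the differential inequality established within the proof of \Cref{Thm:UinK}. Since $T$ is a diffeomorphism, it suffices to analyze the transformed closed loop $\dot y = f(y,\eta)+g(y,\eta)\mu(t,y)$, $\dot\eta=q(y,\eta)$ on the open set $\mathcal{D}':=\{(t,y,\eta): t\ge 0,\ (t,y)\in\mathrm{int}(\mathcal{C})\}$, and to transfer the conclusion back via $x=T^{-1}(y,\eta)$. Because $f,g,q$ are continuous and $\mu$ is measurable in $t$ and continuous in $y$, the right-hand side is a Carath\'eodory function on $\mathcal{D}'$; hence there is a maximal solution $(y,\eta):[0,\omega)\to\R^m\times\R^{n-m}$, $\omega\in(0,\infty]$, emanating from $(y^0,\eta^0)$, and by the standard theory of maximal Carath\'eodory solutions its graph leaves every compact subset of $\mathcal{D}'$ whenever $\omega<\infty$. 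Note that \Cref{prop:CBF} is \emph{not} directly applicable, as it presumes a Lipschitz policy; I therefore argue directly through the inequality from \Cref{Thm:UinK}. By construction the solution remains in $\mathcal{D}'$, so $(t,y(t))\in\mathrm{int}(\mathcal{C})$ for all $t\in[0,\omega)$.

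Next I would derive a priori bounds on $[0,\omega)$. From $(t,y(t))\in\mathrm{int}(\mathcal{C})$ one gets $\|y(t)\|<\psi(t)+\|y_r(t)\|\le\bar\psi+\bar y_r$ with $\bar\psi=\|\psi\|_\infty$, $\bar y_r=\|y_r\|_\infty$, so $y$ is bounded by a constant depending only on $\psi$ and $y_r$. Feeding this into \Cref{ass:StructuralAssumptions}c) with $c_0=\|\eta^0\|+\bar\psi+\bar y_r$ yields $\|\eta(t)\|\le\bar q$ on $[0,\omega)$, where $\bar q$ is independent of $\omega$; here one extends $y|_{[0,\omega)}$ to a globally bounded signal and uses uniqueness of the internal dynamics. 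With this $\bar q$ fixed, the constants $\bar f,\underline g$ from \Cref{Thm:UinK} are well-defined, and repeating the computation in its proof for the admissible input $\mu(t,y(t))=k\nabla_y b/b=-ke/b$, $k\in[\underline k,\bar k]$, gives along the (absolutely continuous) solution
\[
    \tfrac{d}{dt}b(t,y(t)) \;\ge\; \frac{a}{b(t,y(t))}-M \qquad \text{for a.e.\ } t\in[0,\omega),
\]
with $a:=\underline k\,\underline g\,\underline\psi^2>0$, $M:=c\bar\psi^2+\bar\psi\bar f+2\underline k\,\underline g>0$, and $\underline\psi:=\inf_{s\ge 0}\psi(s)>0$.

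From here a comparison argument delivers the quantitative bound. The scalar field $s\mapsto a/s-M$ is positive on $(0,a/M)$, so setting $\beta_0:=\min\{b(0,y^0),\,a/M\}>0$ any downward crossing of the level $\beta_0$ is impossible: at the last time $b$ equals $\beta_0$ the derivative is nonnegative, forcing $b$ to increase again, whence $b(t,y(t))\ge\beta_0$ on $[0,\omega)$. Since $b(0,y^0)\le\tfrac12\psi(0)^2\le\tfrac12\bar\psi^2$, one may choose $\varepsilon\in(0,1)$ with $(1-\varepsilon^2)\bar\psi^2\le 2\beta_0$, which together with $\psi\le\bar\psi$ gives $b(t,y(t))\ge\tfrac12(1-\varepsilon^2)\psi(t)^2$, i.e.\ (ii), equivalently $\|e(t)\|\le\varepsilon\psi(t)<\psi(t)$, so $(t,y(t))\in\mathrm{int}(\mathcal{C})$. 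The estimate $\|u(t)\|=k\|e(t)\|/b(t,y(t))\le\bar k\varepsilon\bar\psi/\beta_0$ then yields (i). Finally, if $\omega<\infty$, the set $K:=\{(t,y,\eta): t\in[0,\omega],\ \|y-y_r(t)\|\le\varepsilon\psi(t),\ \|\eta\|\le\bar q\}$ is a compact subset of $\mathcal{D}'$ containing the entire solution graph, contradicting the escape property; hence $\omega=\infty$. Transferring through $T^{-1}$ shows $x=T^{-1}(y,\eta)$ is a global solution of \eqref{eq:System}, and since the same a priori bounds apply to any solution, every maximal solution is global.

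The main obstacle is to organize this as a genuine bootstrap rather than a circular argument: the differential inequality requires $\|\eta\|\le\bar q$, which requires the $y$-bound, which is only guaranteed while the solution stays in $\mathrm{int}(\mathcal{C})$. The resolution is that membership in $\mathrm{int}(\mathcal{C})$ holds \emph{automatically} on the maximal existence interval, because that is precisely the domain on which $\mu$, and hence the closed loop, is defined; thus the $y$-bound and $\bar q$ are available a priori on $[0,\omega)$ independently of $\omega$, the differential inequality then produces a uniform-in-time lower bound on $b$, and only afterwards does the escape/compactness argument rule out a finite escape time. Secondary care is needed in invoking Carath\'eodory existence and the escape property for the non-Lipschitz feedback $\mu$, which is merely continuous in $y$ and singular at $\partial\mathcal{C}$, and in the application of \Cref{ass:StructuralAssumptions}c) to the a priori only locally defined input $y|_{[0,\omega)}$.
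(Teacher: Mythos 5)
Your proposal is correct, and it shares the paper's overall bootstrap skeleton: obtain a maximal solution on the region where $(t,y)\in\mathrm{int}(\mathcal{C})$ (the paper outsources this to \cite[Thm.~5]{IlchRyan02b}, whereas you argue via Carath\'eodory theory, correctly noting that \Cref{prop:CBF} is inapplicable because $\mu$ need not be Lipschitz); bound the internal state by $\bar q$ through \Cref{ass:StructuralAssumptions}\,c) applied to a bounded global extension of $y|_{[0,\omega)}$, which is exactly the paper's device; show the error stays strictly inside the funnel; and only then conclude $\omega=\infty$. Where you genuinely differ is the invariance step (ii). The paper does not reuse \Cref{Thm:UinK}: it fixes $\hat\varepsilon\in(0,1)$ in advance so that $\hat\varepsilon^2/(1-\hat\varepsilon^2)$ dominates an explicit ratio of the problem constants, sets $\varepsilon=\max\{\|e(0)\|/\psi(0),\hat\varepsilon\}$, and derives a contradiction by showing $\tfrac{d}{dt}\tfrac12\|e/\psi\|^2\le 0$ between the last crossing time $t_*$ of the level $\varepsilon\psi$ and a hypothetical exceedance time $t^*$. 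You instead recycle the differential inequality $\tfrac{d}{dt}b\ge a/b-M$ from the proof of \Cref{Thm:UinK} and run a level-crossing comparison on $b$ itself with threshold $\beta_0=\min\{b(0,y^0),a/M\}$, extracting $\varepsilon$ a posteriori from $(1-\varepsilon^2)\bar\psi^2\le 2\beta_0$. Your route makes the statement a genuine corollary of \Cref{Thm:UinK} and avoids a second hand computation; the paper's normalized-error route keeps the proof self-contained and yields $\varepsilon$ by a closed-form expression. You also spell out the escape-from-compact-sets argument (with the compact set built from $\varepsilon\psi$ and $\bar q$) that the paper leaves implicit in the sentence ``Then, the maximal solution is global.'' Two cosmetic points to tighten: since $\dot b$ exists only almost everywhere, the phrase ``at the last time $b$ equals $\beta_0$ the derivative is nonnegative'' should be replaced by integrating $\dot b\ge a/b-M>0$ over a putative interval on which $b<\beta_0\le a/M$, contradicting any downward crossing; and when invoking Carath\'eodory existence you should state explicitly that $\mu$ is bounded on compact subsets of your domain $\mathcal{D}'$ because $b$ is bounded away from zero there, which supplies the local integrable majorant required by the theory. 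Neither point is a gap.
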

\begin{proof}
    The existence of a local maximal solution ${x: [0,\omega) \to \R^n}$, $\omega \in (0,\infty]$, follows from~\cite[Thm.~5]{IlchRyan02b}.
    We show statement~(ii) with~$\varepsilon \in (0,1)$ determined below. 
    Note that for any $\zeta \in C(\R_{\ge 0},\R^m)$ with $\|\zeta\|_\infty \le \bar \psi + \bar y_r$ it follows from \Cref{ass:StructuralAssumptions} that the global solution $\eta(\cdot;\eta^0,\zeta)$ exists. Then, again by \Cref{ass:StructuralAssumptions}, there exists $\bar q>0$ such that  $\|\eta(\cdot;\eta^0,\zeta)\|_\infty \le \bar q$ for any such function $\zeta$. 
    We set
    \begin{align*}
               &\hat \varepsilon \in (0,1) \ \text{s.t.} \ \frac{\hat \varepsilon^2}{1- \hat \varepsilon^2} \ge \frac{ \left\|\tfrac{\dot \psi}{\psi}\right\|_\infty + \sup_{s \ge 0}\tfrac{1}{\psi(s)}(\bar f + \hat y_r) }{2\underline{g} \underline{k} \inf_{s \ge 0} \tfrac{1}{\psi(s)}}, \\
        & \varepsilon = \max \{ \|y(0) - y_r(0)\|/\psi(0), \hat \varepsilon \} < 1.
    \end{align*}
    In the following, we use $e(t)=y(t)-y_r(t)$.
    Seeking a contradiction, we assume that there exists~$t^* \in [0,\omega)$ such that $\psi(t^*) > \|e(t^*)\| > \varepsilon \psi(t^*)$.
    Note that $\|e(0)\| \le \varepsilon \psi(0)$ by construction of~$\varepsilon$.
    Due to continuity of the involved signals, there exists $t_* := \max\{ t \in [0,t^*) \, | \, \|e(t)\| = \varepsilon \psi(t) \}$. Thus, $\|e(t)\|\ge \varepsilon \psi(t)$ and $\tfrac{\|e\|^2}{\psi^2-\|e\|^2} \ge \tfrac{\varepsilon^2}{1-\varepsilon^2}$ for all $t\in [t_*,t^*]$.
    We omit the time argument and calculate for $t \in [t_*,t^*]$
    \begin{equation*}
        \begin{aligned}
           & \tfrac{\textrm{d}}{\textrm{d} t} \tfrac{1}{2} \| \tfrac{e}{\psi}\|^2 
            = \langle \tfrac{e}{\psi},  \tfrac{\psi \dot e - e \dot \psi}{\psi^2}  \rangle 
            = \langle  \tfrac{e}{\psi},  -\tfrac{e}{\psi} \tfrac{\dot \psi}{\psi} + \tfrac{1}{\psi} (f - \dot y_r + g\,u) \rangle \\
            & \le \left\| \tfrac{\dot \psi}{\psi} \right\|_\infty + (\bar f + \hat y_r) \sup_{s \ge 0} \tfrac{1}{\psi(s)} - \tfrac{\|e\|^2}{\psi^2-\|e\|^2} \inf_{s \ge 0} \tfrac{2\underline{k} \underline{g}}{\psi(s)}  \\
            &\le \left\| \tfrac{\dot \psi}{\psi} \right\|_\infty + (\bar f + \hat y_r) \sup_{s \ge 0} \tfrac{1}{\psi(s)} - \tfrac{\varepsilon^2}{1-\varepsilon^2} \inf_{s \ge 0} \tfrac{2\underline{k} \underline{g}}{\psi(s)}               \le 0.           
        \end{aligned}
    \end{equation*}
    The contradiction $\varepsilon < \|\tfrac{e(t^*)}{\psi(t^*)}\| \le \|\tfrac{e(t_*)}{\psi(t_*)}\| = \varepsilon$ arises upon integration. Thus, $e(t)/\psi(t) \le \varepsilon$ for all $t \in [0,\omega)$. 
    Then, the maximal solution is global, i.e., $\omega=\infty$.
    This yields the assertion $b(t,y(t)) \ge \tfrac12 (1-\varepsilon^2) \psi(t)^2$ for all $t \ge 0$.
    Assertion~(i) is then a consequence of~(ii), with ${\|u(t)\|\leq \tfrac{\bar{k}\bar{\psi}}{(1-\varepsilon^2)\underline{\psi}}}$ $\forall t\geq 0$ using~\eqref{eq:u_FC_from_b}.
\end{proof}
To obtain a controller, which allows the system to evolve outside the safe set $\mathcal{C}$ from~\eqref{eq:SafeSet}, we can consider the following modified control set with a parameter $\delta>0$,
\begin{equation} \label{eq:u_FC_saturated}
    \mathcal{U}_\delta(t,y) \!=\! \left\{ \! \left.  \! \tfrac{k \nabla_y b(t,y)}{\max\{b(t,y),\delta\}}  \right| 
          k \!\in\! [\underline{k},\overline{k}] \! \right\}\!, \ (t,y) \!\in \! \R_{\geq0} \! \times \! \R^m\!.
\end{equation}
However, to ensure asymptotic stability of $\mathcal{C}$ (cf. \cite[Prop.~2]{ames2016control}), the constant $\underline{k}/\delta > 0$ needs to be chosen sufficiently large, i.e., system knowledge has to be included in the controller design. 
Nevertheless, this sets the proposed controller design apart 
from funnel control approaches~\cite{IlchRyan02b,berger2018funnel,BergIlch21} 
as they cannot achieve the evolution of the tracking error $e$ within the funnel 
boundaries at a later point in time, if the initial error $e(0)$ does not satisfy $(0,y(0)) \in \mathrm{int}(\mathcal{C})$.

\section{Discussion} \label{Sec:Discussion}
We reflect on the application of the theoretical results to safe control of unknown dynamical systems. We first discuss how the set~$\mathcal{U}(t,y)$ can be used to define suitable controllers (\Cref{sec:discuss_opt}) and then contrast the results with existing results from CBF-based control (\Cref{sec:discuss_CBF}), funnel-based control (\Cref{sec:discuss_funnel}), and the work studying PPC as a reciprocal CBF-based control~\cite{namerikawa2024equivalence} (\Cref{Sec:PPC_and_TVRCBF}). 

\subsection{Optimization-based CBF implementation}
\label{sec:discuss_opt}
Given the set $\mathcal{U}(t,y)$ in Theorem~\ref{Thm:UinK}, we can use a CBF-inspired QP to determine the control feedback
\begin{equation}\label{eq:QP_CBF}
\begin{aligned}
\mu(t,y):=\argmin_{u\in\mathbb{R}^m}&~\mathrm{cost}(t,u,y) \ \
\text{s.t. } \ u\in\mathcal{U}(t,y).
\end{aligned}
\end{equation}
Here, $\mathrm{cost}(t,u,y)$ is a convex quadratic cost function while the set $\mathcal{U}(t,y)$ from~\eqref{eq:u_FC_from_b} can be written as a set of linear inequality constraints, i.e.,~\eqref{eq:QP_CBF} is a QP. 
The quadratic cost can encode a wide range of different performance criteria,  see~\cite{ames2019control}.
Of particular relevance is penalizing the difference to a desired input, i.e., $\mathrm{cost}(t,u,y)=\|u-u_r(t)\|^2$, which is also referred to as a minimally invasive safety filter~\cite{wabersich2023data}. In this case, Problem~\eqref{eq:QP_CBF} has an analytical solution (cf.~\cite[Eq.~(15)]{cohen2024safety}). 
We note that there exist results ensuring Lipschitz continuity of the resulting feedback $\mu(t,y)$, cf.~\cite{jankovic2018robust}. However, due to a different structure of $\mathcal{U}(t,y)$, it is not obvious if these results also apply for the proposed implementation~\eqref{eq:QP_CBF}. This requires further investigation.
\subsection{Model-free CBF-based control}
\label{sec:discuss_CBF}
The proposed approach, in particular the implementation using~\eqref{eq:QP_CBF}, shares many of the benefits of CBFs~\cite{ames2016control,ames2019control}. Specifically, safety is explicitly enforced through the set $\mathcal{U}(t,y)$ constraining the set of admissible control inputs. Thus, many secondary control objectives related to stability or other performance criteria can be flexibly included through the $\mathrm{cost}$, without affecting safety properties~\cite{wabersich2023data}. 
The key benefit of the proposed approach is its model-free nature. While standard CBF-based control implementations require access to the full state $x$ and the system data $F,G$ in~\eqref{eq:System}, Problem~\eqref{eq:QP_CBF} only utilizes the output measurement~$y$ and the structural condition in \cref{ass:StructuralAssumptions}. 
The most related research directions from the CBF literature are recent approaches using reduced-order (kinematic) models~\cite{cohen2024safety,molnar2021model}. Specifically,  \cite{molnar2021model} is also referred to as model-free due to the minimal assumptions, however, it is limited to a specific class of fully actuated mechanical systems. 

\subsection{CBF-based funnel}
\label{sec:discuss_funnel}
The control input in~\eqref{eq:u_FC_from_b} resembles a funnel controller and the invariance properties shown in Corollary~\ref{cor_funnel} are in line with existing results for funnel control~\cite{BergIlch21}. Compared to these existing results, the proposed approach has two benefits:
First, by characterizing the properties of the feedback through a set inclusion, we can use the flexible implementation strategy introduced in Section~\ref{sec:discuss_opt}, which can account for additional performance criteria without jeopardizing the theoretical guarantees. 
Furthermore, the implementation strategy~\eqref{eq:u_FC_saturated} with $\mathcal{U}_\delta(t,y)$ allows us to naturally consider the problem of smoothly returning back into a safe set $\mathcal{C}$, which is a standard property of (zeroing) CBFs~\cite{ames2016control}. 

\subsection{Prescribed performance control and reciprocal CBF} \label{Sec:PPC_and_TVRCBF}
As mentioned in \Cref{Sec:Introduction}, a connection between PPC and controls based on reciprocal CBFs has been discovered in~\cite{namerikawa2024equivalence}.
By setting the auxiliary variables $e_1(t,x) := (x-x_r(t))/\psi(t)$ and $\xi(t,x) := e_1(t,x)/\sqrt{1-\|e_1(t,x)\|^2}$, one may
define ${ B_{\rm FC} :  \R^n  \to \R_{\ge 0}}$, $\xi  \mapsto \tfrac{1}{2} \|\xi\|^2$.
Using~$B_{\rm FC}$, we can define the feedback $u(t,x) = -k \nabla_x B_{\rm FC}(\xi(t,x)) $ for some $k > 0$.
Then, thanks to the results in~\cite{namerikawa2024equivalence}, we can show that 
i) $u$ keeps the system within predefined boundaries, i.e., $\|x(t)-x_r(t)\|<\psi(t)$ (cf.~\cite[Thm.~2]{namerikawa2024equivalence}), 
ii) $B_{\rm FC}$ is a reciprocal CBF (RCBF) (cf.~\cite[Thm.~2]{namerikawa2024equivalence}),
and iii) $u$ defines a RCBF-based controller (cf.~\cite[Thm.~4]{namerikawa2024equivalence}).
In comparison, key benefits of the approach proposed in the present paper are the fact that zeroing CBFs are used, internal dynamics are considered and, most importantly, the implementation can be naturally adjusted to take  different optimization criteria into account (cf. \Cref{sec:discuss_opt}) and provide a control strategy to return to the safe set, cf.~\eqref{eq:u_FC_saturated}.

\section{Numerical simulation}
\label{Sec:num}
The following numerical simulations demonstrate that the proposed methodology, while no model parameters are used, inherits the benefits of both CBF-based and funnel-based control methods, that is, evolution in safe sets is guaranteed.\\
\textit{Setup:} 
We consider the problem of controlling an unmanned surface vessel, taken from~\cite{namerikawa2024equivalence}.
The dynamics are given by
\begin{align*}
\dot{x}=
\begin{small}
\begin{bmatrix}
-\sin(\tan^{-1}(p_y/p_x)) \\
\cos(\tan^{-1}(p_y/p_x)) \\
0
\end{bmatrix}
\end{small}
+ 
\begin{small}
\begin{bmatrix}
    \cos(\phi) & -\sin(\phi) & 0 \\
    \sin(\phi) & \cos(\phi) & 0 \\
    0&0&1
\end{bmatrix}
\end{small}
\begin{small}
\begin{pmatrix}
    v \\ w \\ r
\end{pmatrix}
\end{small}
\end{align*}
with $y=x=[p_x,p_y,\phi]$ and  $u=[v,w,r]$. These dynamics consist of the known kinematics and an unknown position-dependent drift term that may result from waves.
We have a given reference trajectory $y_r$ with a corresponding funnel size $\psi>0$ such that Assumption~\ref{ass:StructuralAssumptions} holds on the set $\mathcal{C}$.\footnote{%
The definiteness condition in Assumption~\ref{ass:StructuralAssumptions}\,a)  holds, if $|\phi|<\pi/2$. 
 The reference $y_r(t)=[-8/10\cdot t+8,4\cos(3/20\pi t),\arctan(3/2\pi\sin(3/20\pi t)]^\top$ and funnel boundary $\psi(t)=1.3\exp(-2t)+0.2$ ensure that $|\phi|<\pi/2$ holds for all $(t,y)\in\mathcal{C}$. Hence the closed-loop guarantees remain valid.}  
\\ \textit{Simulations: }
Similar to~\cite{namerikawa2024equivalence}, we implement a standard funnel controller with a fixed gain $k=1$. 
In addition, we implement a model-free CBF-based control using Theorem~\ref{Thm:UinK} with $\underline{k}=10^{-3}<\overline{k}=10^3$. 
Specifically, we used the QP implementation~\eqref{eq:QP_CBF} with a quadratic cost that minimizes the distance to a model-based input reference $u_r$.\footnote{$u_r$ is numerically computed based on the known kinematic terms in the dynamics and the derivative of the reference trajectory $\dot{y}_r$.}
The results can be seen in Figure~\ref{fig}. 
As expected, both controllers ensure invariance of the safe set without requiring any model knowledge. However, the CBF-based controller has additional degrees of freedom, which are utilized here to generate control inputs that stay close to a desired input reference. Quantitatively, the mean squared error to the desired input is reduced by~$84~\%$. 
\begin{figure}[t]
\centering 
\includegraphics[width=0.9\columnwidth]{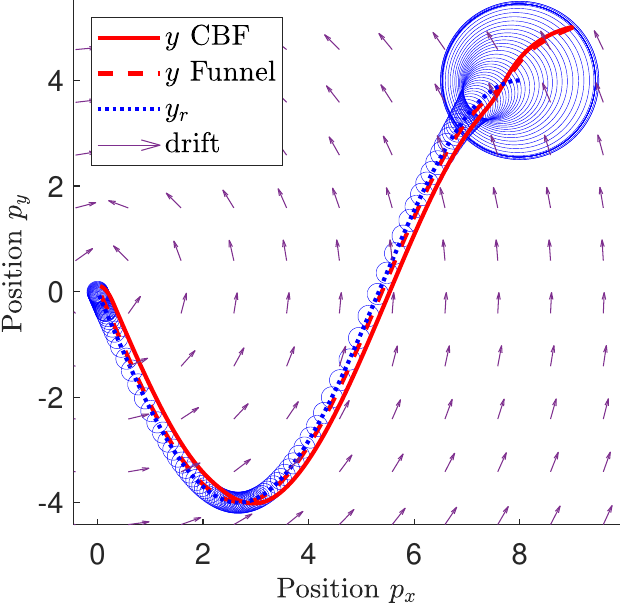}
\includegraphics[width=0.9\columnwidth]{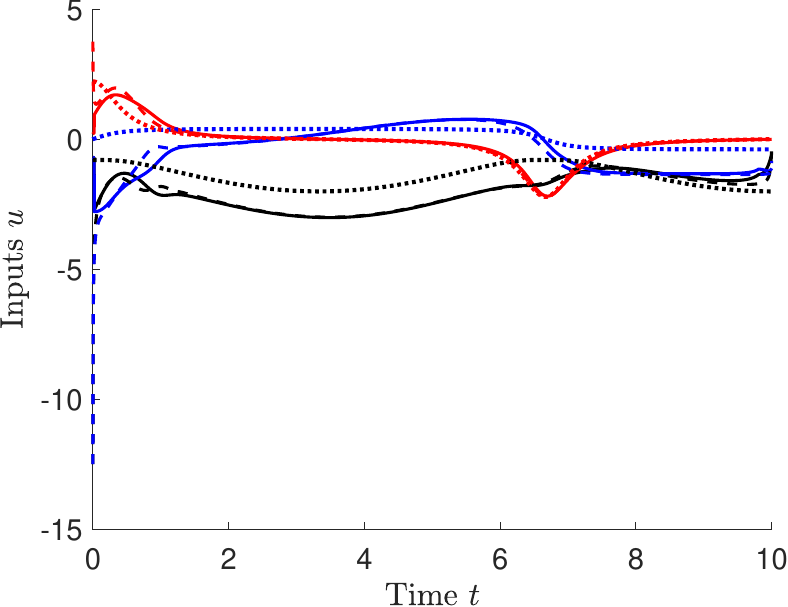}
 \caption{Top: Funnel (blue circles) around reference $y_r$ (blue, dotted) containing closed-loop trajectories of proposed CBF-based controller (red, solid) and funnel controller (red, dashed). 
Bottom: Closed-loop control input $u$ for proposed CBF-based controller (solid), FC (dashed) and input references $u_r$ (dotted) for all three input components (red,blue,black).}
 \label{fig}
\end{figure} 

\section{Conclusion}
In this work, we have presented a control barrier function (CBF) framework to address the problem of tracking reference signals within prescribed error bounds for a class of nonlinear multi-input multi-output systems with relative degree one. By integrating insights from funnel control, we have identified a class of CBF-based control laws that can be synthesized without explicit knowledge of the system dynamics. Instead, our approach relies solely on structural assumptions about the system, enabling the design of a model-free CBF-based controller that establishes safety guarantees for the tracking error.
Future research will focus on extending these results to systems with higher relative degrees~-- a critical step toward broadening the applicability of the method. 
Also, building upon~\cite{Kiss23}, we aim to generalize our model-free CBF approach to time-delay and hysteresis systems, further enhancing its utility in real-world applications.

\bibliographystyle{IEEEtran}
\bibliography{references}
\end{document}